\theoremstyle{plain}
 \newtheorem{thm}{Theorem}
 \newtheorem{cor}{Corollary}
 \newtheorem{lem}{Lemma}
\theoremstyle{definition}
 \newtheorem{defn}{Definition}%\ignorespaces}
\theoremstyle{remark}
 \newtheorem{rem}{Remark}%\ignorespaces}
\begin{document}
\title[Common fixed points for three or four
 mappings]
{Common fixed points for three or four
 mappings\\ via common fixed point
for two mappings}
\author[C. Di Bari]{Cristina Di Bari}
\address[C. Di Bari]{Universit\`{a} degli Studi di Palermo \\
 Dipartimento di Matematica e Informatica \\
Via Archirafi, 34 - 90123 Palermo (Italy)}
\email{dibari@math.unipa.it}
\author[P. Vetro]{Pasquale Vetro}
\address[P. Vetro]{Universit\`{a} degli Studi di Palermo \\
 Dipartimento di Matematica e Informatica \\
Via Archirafi, 34 - 90123 Palermo (Italy)}
 \email{vetro@math.unipa.it}
\thanks{The authors are supported  by Universit\`{a} degli Studi di Palermo, R. S. ex 60\%.}
\date{}

%----------------------------------------
%   Abstract
%----------------------------------------

\begin{abstract}In this paper, we shall show that some coincidence point and common
fixed point results for three or four mappings could easily be
obtained from the corresponding fixed point results for two
mappings.

\smallskip

\noindent {\bf Keywords:} Common fixed points, points of
coincidence, metric spaces.

\smallskip
\noindent {\bf 2000 Mathematics Subject Classification:} 47H10,
54H25.

\end{abstract}
\maketitle

\section{Introduction and Preliminaries}
\noindent Fixed point theory is an important and actual topic of
nonlinear analysis [1-7,\,9-12]. Moreover, it's well known that the
contraction mapping principle, formulated and proved in the Ph.D.
dissertation of Banach in 1920 which was published in 1922 is one of
the most important theorems in classical functional analysis.

It is also known that common fixed point theorems are
generalizations of fixed point theorems. Over the past few decades,
there have many researchers interested in generalizing fixed point
theorems to coincidence point theorems and common fixed point
theorems. Recently, Haghi et al. \cite{Ha} have shown that some
coincidence point and common fixed point results for two mappings
are not real generalizations as they could easily be obtained from
the corresponding fixed point theorems. They used the following
Lemma that is a consequence  of the axiom of choise.
\begin{lem}[\cite{Ha}, Lemma 2.1] \label{L1}Let $X$ be a nonempty set and $f : X \to X$ a function.
Then there exists a subset $E \subset X$ such that $f E = f X$ and
$f : E \to X$ is one-to-one.
\end{lem}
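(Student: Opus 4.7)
The plan is to use the axiom of choice to select, for each element in the image $fX$, a single preimage under $f$, and take $E$ to be the set of these chosen preimages.

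First I would note that for every $y \in fX$, the fiber $f^{-1}(\{y\}) = \{x \in X : f(x) = y\}$ is nonempty by definition. Using the axiom of choice (equivalently, a choice function on the family $\{f^{-1}(\{y\}) : y \in fX\}$ of nonempty subsets of $X$), I would pick one representative $x_y \in f^{-1}(\{y\})$ for each $y \in fX$, and then define
\[
E \;=\; \{ x_y : y \in fX \} \;\subseteq\; X.
\]

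Next I would check the two required properties. For $fE = fX$: the inclusion $fE \subseteq fX$ is immediate from $E \subseteq X$, while for $fX \subseteq fE$ I observe that each $y \in fX$ satisfies $y = f(x_y) \in fE$. For injectivity of $f$ restricted to $E$: suppose $x_y, x_{y'} \in E$ satisfy $f(x_y) = f(x_{y'})$. By construction $f(x_y) = y$ and $f(x_{y'}) = y'$, hence $y = y'$, and so $x_y = x_{y'}$ since only one representative was chosen per fiber.

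The argument is essentially a direct application of choice, so there is no deep obstacle; the only subtlety worth flagging is that the selection of representatives genuinely requires the axiom of choice when $fX$ is uncountable, and that the stated containment $E \subset X$ should be read as $E \subseteq X$ (equality may occur, e.g.\ when $f$ is already one-to-one). No contraction or metric structure enters the argument, which is why the lemma is purely set-theoretic and applies to arbitrary self-maps.
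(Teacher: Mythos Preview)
Your argument is correct and is exactly the standard proof via the axiom of choice. The paper does not supply its own proof of this lemma; it simply quotes it from \cite{Ha} and remarks that it ``is a consequence of the axiom of choice,'' which is precisely the mechanism you invoke.
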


In this paper, we shall show that some coincidence point and common
fixed point for three or four mappings could easily be obtained from
the corresponding fixed point results for two mappings.

\section{Main results}
In this section, first we prove a result of common fixed point for
two self-mappings satisfying a contractive condition of Berinde type
\cite{Be}. Successively, we deduce some common fixed point results
for three or four self-mappings.
\begin{thm}\label{T2} Let $(X,d)$ be a metric space and $S$ and $T$
self-mappings on $X$. Assume that the following condition holds:
\begin{align}d(Sx,Ty) \leq &\alpha d(x,Sx)+ \beta d(y,Ty)+ \gamma
d(x,y)+ \delta [d(y,Sx)+d(x,Ty)]\\
&+ L \min\{d(x,Sx),d(y,Ty), d(y,Sx), d(x,Ty) \}\nonumber
\end{align}
for all $x,y \in X$, where $\alpha, \beta, \gamma, \delta \in [0,1[$
are such that $\alpha +  \beta + \gamma +2 \delta <1$ and $L\geq0$.
If $SX$ or $TX$ is a complete subspace of $X$, then $S$ and $T$ have
a unique common fixed point in $X$.
\end{thm}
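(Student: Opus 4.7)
The plan is a standard Jungck-style alternating iteration. Fix $x_0 \in X$ and define $x_{2n+1} = Sx_{2n}$, $x_{2n+2} = Tx_{2n+1}$ for all $n \geq 0$. First I would show $\{x_n\}$ is Cauchy. Applying (1) with $x = x_{2n}$ and $y = x_{2n+1}$ causes the $L$-term to drop out because $d(x_{2n+1}, Sx_{2n}) = 0$; the $\delta$-cross term is then absorbed using $d(x_{2n}, x_{2n+2}) \leq d(x_{2n}, x_{2n+1}) + d(x_{2n+1}, x_{2n+2})$, producing
\[
d(x_{2n+1}, x_{2n+2}) \leq \frac{\alpha + \gamma + \delta}{1 - \beta - \delta}\, d(x_{2n}, x_{2n+1}).
\]
A symmetric computation with $x = x_{2n+2}$, $y = x_{2n+1}$ yields the analogous bound with ratio $(\beta + \gamma + \delta)/(1 - \alpha - \delta)$. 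Both ratios are strictly less than $1$ precisely because $\alpha + \beta + \gamma + 2\delta < 1$, so taking $h$ to be their maximum gives geometric decay of consecutive distances, hence Cauchyness of $\{x_n\}$.

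Assume without loss of generality that $TX$ is complete. The subsequence $\{x_{2n+2}\} \subset TX$ is Cauchy, so it converges to some $z \in TX$, and the full sequence converges to $z$. To check that $z$ is a common fixed point I would apply (1) with $x = x_{2n}$, $y = z$ and let $n \to \infty$: the $\alpha$ and $\gamma$ contributions vanish, the minimum stays $0$ (it is dominated by $d(x_{2n}, x_{2n+1}) \to 0$), and what survives reduces to $d(z, Tz) \leq (\beta + \delta)\, d(z, Tz)$, forcing $Tz = z$. A mirror application with $x = z$, $y = x_{2n+1}$ gives $d(Sz, z) \leq (\alpha + \delta)\, d(z, Sz)$, so $Sz = z$. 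Uniqueness follows by taking $x = z$, $y = z'$ in (1) for any other common fixed point $z'$: the first two terms and the minimum all vanish, leaving $d(z, z') \leq (\gamma + 2\delta)\, d(z, z')$, hence $z = z'$.

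The main technical nuisance is the bookkeeping required to derive both per-step contraction ratios from the single numerical hypothesis $\alpha + \beta + \gamma + 2\delta < 1$, with care about which parity produces which coefficient. By contrast, the Berinde-type perturbation $L \cdot \min\{\ldots\}$ is harmless throughout, because in each application of (1) encountered above one of the four arguments of the minimum is a distance of the form $d(p,p) = 0$; so the inequality always reduces to a Hardy--Rogers-type estimate despite the arbitrarily large constant $L$.
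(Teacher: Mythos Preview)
Your argument is correct and mirrors the paper's proof almost exactly: the same alternating iteration, the same two contraction ratios $\frac{\alpha+\gamma+\delta}{1-\beta-\delta}$ and $\frac{\beta+\gamma+\delta}{1-\alpha-\delta}$, and the same limiting applications of (1) to show $Sz=z$, $Tz=z$. The only cosmetic differences are that the paper carries the case $SX$ complete (you take $TX$), inserts one triangle inequality before passing to the limit, and leaves uniqueness to the reader, whereas you spell it out; also, your final remark that the $L$-term always has a literal $d(p,p)=0$ argument is exact for the Cauchy and uniqueness steps but only holds in the limit for the fixed-point steps, which you handled correctly earlier anyway.
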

\begin{proof} Let $x_0 \in X$. We construct the sequence
$\{x_n\}$ such that $$x_1=Sx_0,\, x_2=Tx_1, \ldots,
x_{2n+1}=Sx_{2n},\, x_{2n+2}=Tx_{2n+1},\ldots \, .$$

We show that $\{x_n\}$ is a Cauchy sequence in $X$. Taking
$x=x_{2n}$ and $y=x_{2n+1}$ in (1), we obtain
\begin{align*}d(x_{2n+1},x_{2n+2}) = &d(Sx_{2n},Tx_{2n+1})
\leq \alpha d(x_{2n+1},x_{2n})+\beta d(x_{2n+1},x_{2n+2})\\ &
+\gamma d(x_{2n},x_{2n+1})
+\delta[d(x_{2n+1},x_{2n+1})+d(x_{2n},x_{2n+2})]\\
& +L \min\{d(x_{2n},x_{2n+1}),d(x_{2n+1},x_{2n+2}),
d(x_{2n+1},x_{2n+1}),d(x_{2n},x_{2n+2})\}.
\end{align*}

Consequently, we have $$(1-\beta-\delta)d(x_{2n+1},x_{2n+2}) \leq
(\alpha+\gamma+\delta)d(x_{2n},x_{2n+1}).$$

Similarly, Taking $x=x_{2n+2}$ and $y=x_{2n+1}$ in (1), we get
$$(1-\alpha-\delta)d(x_{2n+3},x_{2n+2}) \leq
(\beta+\gamma+\delta)d(x_{2n+1},x_{2n+2}).$$

Let $k=
\max\{\dfrac{\alpha+\gamma+\delta}{1-\beta-\delta},\dfrac{\beta+\gamma+\delta}{1-\alpha-\delta}\}<1$.
Then, we have $$d(x_{n+1},x_{n})\leq kd(x_{n},x_{n-1})\leq \cdots
\leq k^nd(x_1,x_0).$$

It implies that $\{x_n\}$ is a Cauchy sequence. If $SX$ is complete,
then $x_{2n-1} \to z \in SX$ and hence $x_n \to z$. (The same holds
if $TX$ is complete with $z \in TX$.) Now, we prove that $z=Sz$. If
not, we have
\begin{align*} d(Sz,z) & \leq d(Sz,Tx_{2n+1})+d(Tx_{2n+1},z)\\
& \leq d(x_{2n+2},z) + \alpha d(Sz,z) +\beta d(x_{2n+1},x_{2n+2}) +
\gamma d(x_{2n+1},z)\\ & \ \ \ \ +\delta
[d(Sz,x_{2n+1})+d(x_{2n+2},z)]\\
& \ \ \ \
+L\min\{d(Sz,z),d(x_{2n+1},x_{2n+2}),d(Sz,x_{2n+1}),d(x_{2n+2},z)\},
\end{align*}
as $n \to +\infty$, we get $$d(Sz,z)  \leq (\alpha
+\delta)d(Sz,z).$$  This is a contradiction and hence $z=Sz$.

Similarly, we deduce that $z=Tz$ and so $z$ is a common fixed point
of $S$ and $T$. The uniqueness follows by (1).
\end{proof}

\begin{rem}Clearly, Theorem \ref{T2} holds if we assume that $X$ is a
complete metric space.
\end{rem}

Let $X$ be a non-empty set and $T, f: X \to X$. The mappings $T, f$
are said to be weakly compatible if they commute at their
coincidence point (i.e. $Tfx = fTx$ whenever $Tx = fx$). A point $y
\in X$ is called point of coincidence of $T$ and $f$ if there exists a point $%
x\in X\ $such that $y=Tx=fx$.

\begin{lem}\label{LemVe}
Let\ $X$ be a non-empty set and the mappings $T,~f:X\rightarrow X\
$have a unique point$\ $of coincidence $v\ $in$\ X.\ $If $T$ and $f$
are weakly compatible, then $T$ and $f$ have a unique common fixed
point.
\end{lem}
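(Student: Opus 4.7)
The plan is to exploit the hypothesis in two steps: first use weak compatibility to produce a fresh point of coincidence built from $v$, then collapse it to $v$ via uniqueness.

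First I would pick a point $x \in X$ with $v = Tx = fx$, which exists by hypothesis. Applying weak compatibility at this coincidence point yields $T f x = f T x$, and since $Tx = fx = v$, this rewrites as $Tv = fv$. Hence $Tv$ (equivalently $fv$) is itself a point of coincidence of $T$ and $f$. By the assumed uniqueness of the point of coincidence, we must have $Tv = v$. Combined with $Tv = fv$, this gives $v = Tv = fv$, so $v$ is a common fixed point.

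For uniqueness of the common fixed point, suppose $w \in X$ also satisfies $Tw = fw = w$. Then $w$ is itself a point of coincidence of $T$ and $f$, so by the uniqueness hypothesis $w = v$. This finishes the argument.

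There is no real obstacle here: the only subtlety is to notice that weak compatibility, applied at \emph{the} coincidence point producing $v$, turns $v$ into the argument of a new coincidence equation $Tv = fv$, and then the uniqueness assumption does all the work. No contractive or metric hypothesis is needed, which is why the lemma holds in the purely set-theoretic setting stated.
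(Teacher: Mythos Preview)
Your proof is correct and follows essentially the same route as the paper's: pick $u$ with $v=Tu=fu$, use weak compatibility to get $Tv=fv$, and invoke uniqueness of the point of coincidence to conclude $Tv=fv=v$. You even spell out the uniqueness of the common fixed point more explicitly than the paper does.
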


\begin{proof}
Since\ $v$ is a point of\ coincidence of $T$ and $f$. Therefore, $%
v=fu=Tu$\ for\ some $u\in X.$ By weakly compatibility of  $T$ and $f$ we have%
\begin{equation*}
 Tv=Tfu=fTu=fv.
\end{equation*}%
It implies that $Tv=fv=w$ (say). Then $w$ is a point of coincidence
of $T$ and $f$. Therefore, $v=w$ by uniqueness. Thus $v$ is\ a
unique common fixed point of\ $T$ and $f$.
\end{proof}

From Theorem \ref{T2}, we deduce the following theorems.

\begin{thm}\label{T3} Let $(X,d)$ be a metric space and $S, T$ and $f$
self-mappings on $X$ such that $SX\cup TX \subset fX$. Assume that
the following condition holds:
\begin{align}d(Sx,Ty) \leq & \alpha d(fx,Sx)+ \beta d(fy,Ty)+ \gamma
d(fx,fy)\\ &+ \delta [d(fy,Sx)+d(fx,Ty)] \nonumber\\
&+L\min\{d(fx,Sx),d(fy,Ty),d(fy,Sx),d(fx,Ty)\},\nonumber
\end{align}
for all $x,y \in X$, where $\alpha, \beta, \gamma, \delta \in [0,1[$
are such that $\alpha +  \beta + \gamma +2 \delta <1$ and $L\geq 0$.
If $(S,f)$ and $(T,f)$ are weakly compatible and $fX$ is a complete
subspace of $X$, then $S,T$ and $f$ have a unique common fixed point
in $X$.
\end{thm}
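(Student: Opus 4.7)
The plan is to reduce Theorem \ref{T3} to Theorem \ref{T2} by using Lemma \ref{L1} to factor out $f$ on a set where it is injective, and then to pass from points of coincidence to common fixed points via Lemma \ref{LemVe}.

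First I would apply Lemma \ref{L1} to $f$ to choose $E \subseteq X$ with $fE = fX$ and $f|_E$ one-to-one, and write $Y := fX$ (a complete metric space by hypothesis). Since $SX \cup TX \subseteq fX = fE$ and $f|_E$ is injective, the assignments
\[ S_1(fx) := Sx, \qquad T_1(fx) := Tx \qquad (x \in E) \]
unambiguously define self-mappings $S_1, T_1 : Y \to Y$.

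Next I would check that $(S_1,T_1)$ satisfies the contractive condition (1) of Theorem \ref{T2} on $Y$: for $u = fx$ and $v = fy$ with $x,y \in E$, the substitutions $fx \mapsto u$, $fy \mapsto v$, $Sx \mapsto S_1 u$, $Ty \mapsto T_1 v$ turn the right-hand side of (2) into the right-hand side of (1) verbatim. The Remark after Theorem \ref{T2} then yields a unique fixed point $z \in Y$ with $S_1 z = T_1 z = z$. Choosing $u \in E$ with $fu = z$ gives $Su = S_1(fu) = z = fu$ and $Tu = T_1(fu) = z = fu$, so $z$ is a point of coincidence of both $(S,f)$ and $(T,f)$.

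To finish via Lemma \ref{LemVe} I need uniqueness of the point of coincidence for each pair. If $p = Sa = fa$ and $q = Tb = fb$ are any such points, substituting $x=a$, $y=b$ in (2) yields
\[ d(p,q) \leq \gamma\, d(p,q) + \delta\bigl[d(q,p) + d(p,q)\bigr] = (\gamma + 2\delta)\, d(p,q), \]
and $\gamma + 2\delta < 1$ forces $p = q$; hence $(S,f)$ and $(T,f)$ share $z$ as their unique point of coincidence. Weak compatibility together with Lemma \ref{LemVe} then supplies a unique common fixed point for each pair, which must be $z$, so $Sz = Tz = fz = z$. The main bookkeeping obstacle is the correct setup of $S_1, T_1$: injectivity of $f|_E$ is what makes the definitions single-valued, while $fE = fX$ is what lets the contractive inequality (1) extend to all pairs $u,v \in Y$. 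Once this is in place, Theorem \ref{T2}, its Remark, Lemma \ref{LemVe}, and the short uniqueness argument above close the proof with no further work.
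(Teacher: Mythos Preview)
Your proposal is correct and follows essentially the same route as the paper: apply Lemma~\ref{L1} to get $E$ with $fE=fX$ and $f|_E$ injective, define the induced maps on $fE$, invoke Theorem~\ref{T2} (via the Remark) on the complete space $fE$, and finish with Lemma~\ref{LemVe}. Your uniqueness step is in fact slightly cleaner than the paper's: you compare an arbitrary $(S,f)$-coincidence value with an arbitrary $(T,f)$-coincidence value, which is exactly what Lemma~\ref{LemVe} needs for each pair, whereas the paper only explicitly treats points where $Sx=Tx=fx$.
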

\begin{proof} By Lemma \ref{L1} there exists $E\subset X$ such that $fE=fX$
and $f : E \to X$ is one-to-one. Now, define two mappings $g,h: fE
\to fE$ by $g(fx)=Sx$ and $h(fx)=Tx$, respectively. Since $f$ is
one-to-one on $E$, then $g,h$ are well-defined. Note that,
\begin{align}d(g(fx),h(fy)) & \leq \alpha d(fx,g(fx))+ \beta d(fy,h(fy))+ \gamma
d(fx,fy)\\ & \ \ \ + \delta [d(fy,g(fx))+d(fx,h(fy))]\nonumber\\
& \ \ \
+L\min\{d(fx,g(fx)),d(fy,h(fy)),d(fy,g(fx)),d(fx,h(fy))\}.\nonumber
\end{align}
By Theorem \ref{T2}, as $fE$ is a complete subspace of $X$, we
deduce that there exists a unique common fixed point $fz \in fE$ of
$g$ and $h$, that is $fz=g(fz)=h(fz)$. Thus $z$ is a coincidence
point of $S,T$ and $f$. Note that, if $Sx=Tx=fx$, using (2) we get
$fx=fz$ and so $S,T$ and $f$ have a unique point of coincidence $fz
\in fE$. Now, since $(S,f)$ and $(T,f)$ are weakly compatible, by
Lemma \ref{LemVe}, we deduce that $fz$ is a unique fixed point of
$S,T$ and $f$.
\end{proof}

\begin{thm}\label{T4} Let $(X,d)$ be a metric space and $S,T,f$ and $g$
self-mappings on $X$ such that $SX,\,TX \subset fX=gX$. Assume that
the following condition holds:
\begin{align}d(Sx,Ty) \leq &\alpha d(fx,Sx)+ \beta d(gy,Ty)+ \gamma
d(fx,gy)+ \delta [d(gy,Sx)+d(fx,Ty)]\\
&+L\min\{d(fx,Sx),d(gy,Ty),d(gy,Sx),d(fx,Ty)\},\nonumber
\end{align}
for all $x,y \in X$, where $\alpha, \beta, \gamma, \delta \in [0,1[$
are such that $\alpha +  \beta + \gamma +2 \delta <1$ and $L\geq 0$.
If $(S,f)$ and $(T,g)$ are weakly compatible and $fX$ is a complete
subspace of $X$, then $S,T,f$ and $g$ have a unique common fixed
point in $X$.
\end{thm}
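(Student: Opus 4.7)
The plan is to reduce Theorem \ref{T4} to the two-mapping Theorem \ref{T2}, in the spirit of the proof of Theorem \ref{T3}, by passing to a subset on which both $f$ and $g$ become injective. First I would apply Lemma \ref{L1} twice, once to $f$ and once to $g$, obtaining $E_1, E_2 \subset X$ with $fE_1 = fX$, $gE_2 = gX$, and with $f$ and $g$ injective on the respective subsets. The hypothesis $fX = gX$ collapses the two images into a single set $Y := fE_1 = gE_2$, which is complete by assumption.

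I would then define $F, G : Y \to Y$ by $F(fx) = Sx$ for $x \in E_1$ and $G(gy) = Ty$ for $y \in E_2$. Injectivity makes these well-defined, and $SX \cup TX \subset fX = Y$ ensures $F$ and $G$ really map $Y$ into itself. Writing $u = fx$ with $x \in E_1$ and $v = gy$ with $y \in E_2$, condition (4) becomes exactly the Berinde-type condition (1) of Theorem \ref{T2} for the pair $(F, G)$ on $Y$. Theorem \ref{T2} then produces a unique common fixed point $u^{\ast} \in Y$, giving $z_1 \in E_1$ and $z_2 \in E_2$ with $Sz_1 = fz_1 = u^{\ast} = gz_2 = Tz_2$; thus $u^{\ast}$ is simultaneously a point of coincidence of $(S,f)$ and of $(T,g)$.

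The final step is to pass from coincidence to common fixed points. The key observation is that if $u_1 = Sx_1 = fx_1$ and $u_2 = Tx_2 = gx_2$ are any coincidence values of the two pairs, substituting into (4) kills the $\alpha$, $\beta$, and $L$ contributions and leaves $d(u_1, u_2) \leq (\gamma + 2\delta)\, d(u_1, u_2)$; since $\gamma + 2\delta < 1$, this forces $u_1 = u_2$. Hence each of $(S,f)$ and $(T,g)$ has a unique point of coincidence and the two coincide, so Lemma \ref{LemVe} applied to each weakly compatible pair yields $u^{\ast} = Su^{\ast} = fu^{\ast} = Tu^{\ast} = gu^{\ast}$. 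Uniqueness of the common fixed point of $S, T, f, g$ then follows from the same coincidence-point uniqueness argument, since any common fixed point is in particular a point of coincidence of both pairs.

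The only subtlety I anticipate is ensuring that $F$ and $G$ genuinely take values in the \emph{same} complete space $Y$: this is precisely what the hypothesis $fX = gX$ buys, and once it is in place the argument is essentially a translation of the proof of Theorem \ref{T3} from one auxiliary mapping to two.
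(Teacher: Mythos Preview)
Your proposal is correct and matches the paper's own proof essentially step for step: the paper likewise applies Lemma~\ref{L1} to $f$ and to $g$ to obtain $E_1,E_2$ with $fE_1=fX=gX=gE_2$, defines the induced maps $A(fx)=Sx$ and $B(gy)=Ty$ on this common complete image, invokes Theorem~\ref{T2}, and then uses the $(\gamma+2\delta)$ estimate together with Lemma~\ref{LemVe} exactly as you describe. The only cosmetic difference is notation ($A,B$ versus your $F,G$) and that the paper phrases the uniqueness step by fixing one coincidence value $gv$ and comparing an arbitrary $fw$ against it, which is the same computation you give.
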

\begin{proof} By Lemma \ref{L1}, there exists $E_1, E_2\subset X$ such that
$fE_1=fX=gX=gE_2$,\break  $f : E _1\to X$  and $g : E _2\to X$ are
one-to-one. Now, define two mappings $A,B: fE_1 \to fE_1$ by
$A(fx)=Sx$ and $B(gx)=Tx$, respectively. Since $f,g$ are one-to-one
on $E_1, E_2$, respectively, then the mappings $A,B$ are
well-defined. Note that,
\begin{align*}d(Afx,Bgy) \leq &\alpha d(fx,Afx)+ \beta d(gy,Bgy)+ \gamma
d(fx,gy)+ \delta [d(gy,Afx)+d(fx,Bgy)]\\
&+L\min\{d(fx,Afx),d(gy,Bgy),d(gy,Afx),d(fx,Bgy)\},
\end{align*}
for all $fx,gy \in fE_1$.

By Theorem \ref{T2}, as $fE_1$ is a complete subspace of $X$, we
deduce that there exists a unique common fixed point $fz \in fE_1$
of $A$ and $B$. Thus $z$ is a coincidence point of $S$ and $f$, that
is $Sz=fz$. Now, let $v \in X$ such that $fz=gv$, then $v$ is a
coincidence point of $T$ and $g$, that is $Tv=gv$. We show that $S$
and $f$ have a unique point of coincidence. If $Sw=fw$ and $fw\neq
fz$, from (4) for $x=w$ and $y=v$, we get $$d(fw,gv)=d(Sw,Tv) \leq
(\gamma +2\delta) d(fw,gv),$$ which implies $fw=gv=fz$. Since $S$
and $f$ are weakly compatible, by Lemma \ref{LemVe}, it follows that
$fz$ is the unique common fixed point of $S$ and $f$. Similarly,
$gv$ is the unique common fixed point for $T$ and $g$ and so $fz=gv$
is the unique common fixed point for $S,T,f$ and $g$.
\end{proof}

From the proof of the Theorems 2-3 it follows that the following
results of points of coincidence for three or four self-mappings
hold.

\begin{thm}\label{C3} Let $(X,d)$ be a metric space and $S, T$ and $f$
self-mappings on $X$ such that $SX\cup TX \subset fX$. Assume that
the following condition holds:
\begin{align}d(Sx,Ty) \leq & \alpha d(fx,Sx)+ \beta d(fy,Ty)+ \gamma
d(fx,fy)\\ &+ \delta [d(fy,Sx)+d(fx,Ty)] \nonumber\\
&+L\min\{d(fx,Sx),d(fy,Ty),d(fy,Sx),d(fx,Ty)\},\nonumber
\end{align}
for all $x,y \in X$, where $\alpha, \beta, \gamma, \delta \in [0,1[$
are such that $\alpha +  \beta + \gamma +2 \delta <1$ and $L\geq 0$.
If $fX$ is a complete subspace of $X$, then $S,T$ and $f$ have a
unique point of coincidence in $X$.
\end{thm}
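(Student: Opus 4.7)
The plan is to mimic the proof of Theorem \ref{T3}, simply stopping before the step that invokes weak compatibility via Lemma \ref{LemVe}. Since Theorem \ref{C3} has exactly the same hypotheses as Theorem \ref{T3} minus weak compatibility, and demands only the weaker conclusion (a unique point of coincidence rather than a unique common fixed point), this truncation should suffice.

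First I would invoke Lemma \ref{L1} on the mapping $f$ to obtain a subset $E \subset X$ with $fE = fX$ and $f$ restricted to $E$ one-to-one. Since $SX \cup TX \subset fX = fE$, for every $fx \in fE$ (with $x \in E$) the images $Sx$ and $Tx$ lie in $fE$, so the mappings $g, h : fE \to fE$ defined by $g(fx) = Sx$ and $h(fx) = Tx$ are well-defined (injectivity of $f$ on $E$ ensures there is no ambiguity in the definition). Rewriting hypothesis (5) in terms of $g$ and $h$ reproduces precisely condition (1) of Theorem \ref{T2} on the metric space $fE$.

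Next, because $fE = fX$ is assumed to be a complete subspace of $X$, Theorem \ref{T2} applies to the pair $(g,h)$ on $fE$ and yields a unique common fixed point $fz \in fE$, i.e.\ $fz = g(fz) = h(fz)$. Unwinding the definitions, this gives $fz = Sz = Tz$, so $fz$ is a point of coincidence of $S$, $T$, and $f$.

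For uniqueness of the point of coincidence, I would argue directly from (5): if $fw = Sw = Tw$ for some $w \in X$, then substituting $x = w$, $y = z$ into (5) gives $d(fw,fz) = d(Sw,Tz) \leq (\gamma + 2\delta)\,d(fw,fz)$, and since $\gamma + 2\delta < 1$ this forces $fw = fz$. The only real obstacle is the purely notational one of verifying that condition (5), written in terms of $Sx, Tx, fx$, translates cleanly into the two-mapping contraction condition (1) for $g$ and $h$ on $fE$; once that bookkeeping is settled, everything reduces to Theorem \ref{T2} and a single application of (5).
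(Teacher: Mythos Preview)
Your proposal is correct and matches the paper's own argument: the paper does not give a separate proof of Theorem~\ref{C3} but simply remarks that it follows from the proof of Theorem~\ref{T3}, and your truncation of that proof before the weak-compatibility step (Lemma~\ref{LemVe}) is precisely what is meant. Your uniqueness argument via $d(fw,fz)\le(\gamma+2\delta)d(fw,fz)$ is exactly the computation the paper sketches with the phrase ``using (2) we get $fx=fz$''.
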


\begin{thm}\label{C4} Let $(X,d)$ be a metric space and $S,T,f$ and $g$
self-mappings on $X$ such that $SX,\,TX \subset fX=gX$. Assume that
the following condition holds:
\begin{align}d(Sx,Ty) \leq &\alpha d(fx,Sx)+ \beta d(gy,Ty)+ \gamma
d(fx,gy)+ \delta [d(gy,Sx)+d(fx,Ty)]\\
&+L\min\{d(fx,Sx),d(gy,Ty),d(gy,Sx),d(fx,Ty)\},\nonumber
\end{align}
for all $x,y \in X$, where $\alpha, \beta, \gamma, \delta \in [0,1[$
are such that $\alpha +  \beta + \gamma +2 \delta <1$ and $L\geq 0$.
If $fX$ is a complete subspace of $X$, then $S,T,f$ and $g$ have a
unique point of coincidence in $X$.
\end{thm}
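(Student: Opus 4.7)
My plan is to mirror the proof of Theorem \ref{T4}, stopping just after the uniqueness of the point of coincidence has been established, so that the weak-compatibility hypothesis of Theorem \ref{T4} is no longer needed. First, I would apply Lemma \ref{L1} separately to $f$ and to $g$, obtaining subsets $E_1, E_2 \subset X$ with $fE_1 = fX$, $gE_2 = gX$, and $f|_{E_1}$, $g|_{E_2}$ injective. Because $fX = gX$, the set $Y := fE_1 = gE_2$ is unambiguously defined, and the inclusions $SX, TX \subset fX = Y$ guarantee that $S$ and $T$ take values in $Y$.

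Next, I would define $A, B \colon Y \to Y$ by $A(fx) = Sx$ for the unique $x \in E_1$ representing a given point $fx \in Y$, and $B(gy) = Ty$ for the unique $y \in E_2$ representing $gy \in Y$; injectivity of $f|_{E_1}$ and $g|_{E_2}$ makes $A$ and $B$ well-defined. Substituting $fx$ for the first variable and $gy$ for the second turns the contractive hypothesis (6) for $(S,T,f,g)$ into the contractive hypothesis (1) of Theorem \ref{T2} for $(A,B)$ regarded as self-mappings of $Y$. Since $Y = fX$ is assumed to be a complete subspace of $X$, Theorem \ref{T2} (in its ``$X$ is complete'' formulation) applies and produces a unique common fixed point $fz \in Y$ of $A$ and $B$. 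Unwinding the definitions: $A(fz) = fz$ translates to $Sz = fz$, and picking $v \in E_2$ with $gv = fz$, the identity $B(gv) = gv$ becomes $Tv = gv$, so $fz$ is simultaneously a point of coincidence for $(S,f)$ and for $(T,g)$.

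The main obstacle is uniqueness of the point of coincidence without recourse to weak compatibility (and hence to Lemma \ref{LemVe}). I would adapt the argument used in Theorem \ref{T4}: if $Sw = fw$ is any point of coincidence of $S$ and $f$, then plugging $x = w$ and $y = v$ into (6) gives
\[
d(fw, gv) \;=\; d(Sw, Tv) \;\leq\; (\gamma + 2\delta)\, d(fw, gv),
\]
because $d(fw, Sw) = 0$, $d(gv, Tv) = 0$, and the minimum in the $L$-term contains the value $0$. Since $\gamma + 2\delta < 1$, this forces $fw = gv = fz$. An entirely symmetric argument (interchanging the roles of $(S,f)$ and $(T,g)$) shows uniqueness for the pair $(T,g)$. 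Therefore $fz$ is the unique point of coincidence common to $S$, $T$, $f$ and $g$, which is the desired conclusion.
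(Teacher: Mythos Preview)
Your proposal is correct and follows exactly the approach the paper intends: the paper does not give a separate proof of Theorem~\ref{C4} but simply notes that it follows from the proof of Theorem~\ref{T4}, and your write-up is precisely that extraction --- apply Lemma~\ref{L1} to $f$ and $g$, define $A,B$ on $Y=fE_1=gE_2$, invoke Theorem~\ref{T2}, and then use the inequality with $x=w$, $y=v$ to force $(\gamma+2\delta)d(fw,gv)\geq d(fw,gv)$, hence $fw=fz$. Your added symmetric verification for the pair $(T,g)$ is appropriate here since, unlike in Theorem~\ref{T4}, weak compatibility and Lemma~\ref{LemVe} are not available to finish the argument.
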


\end{document}